\newcommand{\prn}[1]{\left(#1\right)}
\newcommand{\brk}[1]{\left[#1\right]}
\newcommand{\BRK}[1]{\left\{#1\right\}}
\newcommand{\abs}[1]{\left|#1\right|}
\newcommand{\ud}[1]{\, \mathrm{d}#1}
\newcommand{\pd}[2]{\frac{\partial#1}{\partial#2}}
\newcommand{\sign}{{\rm sign}}
\renewcommand{\S}{\mathcal{S}}
\theoremstyle{plain}
\newtheorem{thm}{Theorem}
\newtheorem{lem}{Lemma}
\theoremstyle{definition}
\theoremstyle{remark}
\newtheorem{rem}{Remark}
\begin{document}
\title{A rarefaction-tracking method for hyperbolic conservation laws}
\author{Yossi Farjoun \and Benjamin Seibold
\and \\
Department of Mathematics \\
Massachusetts Institute of Technology \\
77~Massachusetts Avenue, Cambridge MA 02139}
\date{}
\maketitle

\begin{abstract}
We present a numerical method for scalar conservation laws in one space dimension. The
solution is approximated by local similarity solutions. 
While many commonly used approaches are based on shocks, the presented method uses rarefaction
and compression waves. 
The solution is
represented by particles that carry function values and move according to the method of
characteristics. Between two neighboring particles, an interpolation is defined by an
analytical similarity solution of the conservation law. An interaction of particles
represents a collision of characteristics. The resulting shock is resolved by merging
particles so that the total area under the function is conserved.
The method is variation diminishing, nevertheless, it has no numerical dissipation away
from shocks. Although shocks are not explicitly tracked, they can be located accurately.
We present numerical examples, and outline specific applications and extensions of the
approach.
\end{abstract}

\section{Introduction}
Hyperbolic conservation laws are important models for the evolution of continuum quantities,
and their efficient numerical approximation still involves challenges. A fundamental
property is the occurrence of shocks. These appear when parts of the solution that move
at different characteristic velocities collide, forming a traveling discontinuity.
A sharp and accurate resolution of shocks, without creating spurious oscillations, is one
crucial challenge in numerical methods.
Traditional finite difference \cite{LaxWendroff1960} or finite volume \cite{Godunov1959}
methods operate on a fixed grid. Straightforward linear numerical schemes are either of
low accuracy, or create overshoots near shocks \cite{Godunov1959}. These overshoots can
be avoided by considering more complex nonlinear schemes, such as finite volume methods
with limiters \cite{VanLeer1974}, and ENO \cite{HartenEngquistOsherChakravarthy1987}
or WENO \cite{LiuOsherChan1994} schemes. These difficulties show up in simple problems,
such as scalar, one dimensional conservation laws, even for linear advection. One
interpretation of this challenge is that fixed grid methods attempt to represent
advection by a change in function values.
However, since advection moves the solution along the $x-$axis, it is natural to 
move the position of the computational nodes instead.  
Two popular approaches that employ this either track shocks explicitly, or trace characteristics.
One of the first shock-tracking methods is Godunov's scheme \cite{Godunov1959}.
An initially piecewise constant function is evolved analytically by solving the local
Riemann problems exactly. This works well until two shocks (or rarefactions) interact.
Godunov's scheme avoids the resolution of this interaction by remeshing:
the solution is averaged over cells, and a new piecewise constant solution is
reconstructed. In this sense, Godunov's scheme is actually a fixed-grid finite volume
method, and thus incurs the problems described above.
The idea of tracing characteristics has been formulated by Courant, Issacson and Rees in
the CIR method \cite{CourantIsaacsonRees1952}. The solution of a conservation law is given
by a simple ordinary differential equation along a characteristic curve. The CIR method
updates function values at grid points by following characteristic curves. Function values
between grid points are defined by an interpolation, which is a remeshing step. Therefore,
the CIR method is a fixed grid approach as well. In the simplest case of using linear
interpolation between grid points, a simple non-conservative upwind scheme is obtained.

Most popular numerical schemes for conservation laws are fixed grid approaches.
Advantages include simple data structures and an easy generalization to higher space
dimensions using dimensional splitting. While these methods can be derived from
similarity solutions (finite volumes) or the method of characteristics (CIR), they
require a global remeshing step, which introduces numerical errors, numerical dissipation
and dispersion, and a decay in entropy even away from shocks. More recent approaches
attempt to avoid global remeshing, or at least weaken its negative impact.
An example of such an approach is front tracking \cite{HoldenHoldenHeghKrohn1988}.
Shocks are tracked explicitly, but unlike with Godunov-type schemes, no cells exist and no
averaging is performed. Instead, the interaction of the local similarity solutions is
resolved where necessary. The difficulty of resolving interactions of rarefactions with
shocks is avoided by approximating the flux function by a piecewise linear function.
Thus, only discontinuities have to be tracked, and rarefactions are approximated by step
functions.
Analogously, the method of characteristics can be used wherever characteristic curves do
not intersect. Lagrangian particle methods are based on this idea. Particles follow the
characteristics, and only when they come too close is their interaction resolved locally.
Classical approaches, such as smoothed particle hydrodynamics \cite{Monaghan2005},
consider a pressure that prevents particle collisions. While such approaches work well
in smooth regions, a stable treatment of shocks is challenging. In addition, hybrid
methods exist, such as the self-adjusting grid method \cite{HartenHyman1983}, which
operates on a fixed grid, and, in addition, tracks up to one shock particle in each cell.

The method presented in the current paper can be interpreted as a combination of some of
the aforementioned ideas. Characteristic curves are traced using Lagrangian particles.
In addition, between neighboring particles, a similarity solution is evolved exactly.
While front-tracking methods trace shocks, the current method uses rarefaction and
compression waves to represent the solution. Due to this analogy, the approach can be
called \emph{rarefaction tracking}. The local similarity solutions give rise to a
natural definition of area between two particles. This, in turn, admits a merging
strategy of colliding particles that conserves area exactly. Thus, shocks are
represented by compression waves that are removed upon breaking. Due to the correct
evolution of area, correct shock speeds are obtained.
The method was originally motivated in the context of Lagrangian particle
methods \cite{FarjounSeibold2009_1}. Well-posedness, accuracy and
convergence have been shown and investigated in \cite{FarjounSeibold2009_2}. 
This paper focuses on the connection to  local similarity
solutions. In addition, applications for which the
approach has advantages over fixed-grid approaches are outlined. 
The method, as it is presented here, is suited for scalar conservation
laws, since for this kind of equations similarity solutions are
straightforward to construct. 
Also, only one space dimension is currently considered, since in this case a shock
can only happen if particles collide.
However, extensions to balance laws with source terms are presented,
and other possible generalizations are outlined. 

In Sect.~\ref{sec:particle_method}, we outline the general characteristic particle
approach. In Sect.~\ref{sec:interpolation_similarity_waves}, similarity solutions between
particles are constructed. These give rise to an exact expression for area, which is the
basis for particle management, as explained in Sect.~\ref{sec:particle_management}.
Properties of the approach as presented in Sect.~\ref{sec:properties},
and fundamental extensions (for simple sources, and flux functions with
one inflection point) are outlined in Sect.~\ref{sec:extensions}.
In Sect.~\ref{sec:numerical_examples}, results of computations with the presented method
are given. We consider an ``academic'' flux function, and two physical models.
Possible applications of the method and possibilities for further
extensions are presented in Sect.~\ref{sec:further_applications}.

\section{The Particle Method}
\label{sec:particle_method}
We consider a one dimensional scalar conservation law
\begin{equation}
u_t+f(u)_x = 0, \quad u(x,0) = u_0(x)
\label{eq:conservation_law}
\end{equation}
with $f'$ continuous. In addition, until we explicitly relax the condition, we assume
that on the entire interval of considered function values $[\min_x u(x),\max_x u(x)]$,
$f$ is either convex or concave. The characteristic equations \cite{Evans1998}
\begin{equation}
\begin{cases}
\dot x = f'(u) \\
\dot u = 0
\end{cases}
\label{eq:characteristic_equation}
\end{equation}
yield the solution (while it is smooth) forward in time:
at each point $(x_0,u_0(x_0))$ a characteristic curve $x(t) = x_0+f'(u_0(x_0))\, t$ starts,
carrying the function value $u(x(t),t) = u_0(x_0)$.
The method of characteristics gives rise to a particle method:
\begin{enumerate}
\item
Sample the initial function $u_0(x)$ by a finite number of points $(x_i,u_i)$.
The aspect of sampling the initial conditions is addressed in
Sect.~\ref{subsec:sampling_initial_data}.
\item
The solution at time $t>0$ is given by the points $(x_i+f'(u_i)t,u_i)$.
At these points, the solution is represented exactly, without any approximation error.
\end{enumerate}
This basic approach incurs two fundamental limitations, which need to be remedied
before it can become a useful method:
\begin{itemize}
\item
The solution is only known at the particle positions. If neighboring particles
depart, large regions of unspecified function value may arise.
The presented approach remedies this problem by defining an interpolation between
particles. This allows an easy insertion of new particles into gaps wherever desired.
\item
A particle may overtake its neighbor, corresponding to the intersection of
characteristic curves. The correct weak solution of the conservation
law \eqref{eq:conservation_law} develops a shock, i.e.~a moving discontinuity, into
which the characteristic curves keep running \cite{Evans1998}.
In the presented approach, particles are merged upon collision. An interpolation
between particles allows the merging of particles in such a way that area is
conserved.
\end{itemize}
The insertion and merging of particles is called \emph{particle management}.
It is described in detail in Sect.~\ref{sec:particle_management}.
Particle management is the key ingredient that enables us to use the method of
characteristics beyond the first occurrence of shocks.

Given the solution at time $t$, represented by particles $(x_i(t),u_i(t))$, the
time of the next particle collision is $t+\varDelta t_{\text s}$, where
\begin{equation}
\varDelta t_i = -\frac{x_{i+1}-x_i}{f'(u_{i+1})-f'(u_i)}
\label{eq:intersection_time}
\end{equation}
is the time until the collision between particles $i$ and $i+1$, and
\begin{equation}
\varDelta t_{\text s} = \min\BRK{\BRK{\varDelta t_i | \varDelta t_i\ge 0}\cup\infty}
\label{eq:time_step}
\end{equation}
is the time until the next collision in the future. We can step forward  by up to
$\varDelta t_{\text s}$ without risking a collision. 
After $\varDelta t_{\text s}$, at least one particle will share its
position with another. To proceed further, we need to perform particle management.
We insert new particles into gaps between particles of distance larger than a maximum
distance $d_\text{max}$. Then, each pair of particles sharing their position is merged.
Optionally, one can perform a merging step for all particles closer than a minimum
distance $d_\text{min}$. Choices for the maximum and minimum distance are described in
Sect.~\ref{sec:particle_management}.

\section{Interpolation by Similarity Waves}
\label{sec:interpolation_similarity_waves}
Consider two neighboring particles $(x_1(t),u_1)$, $(x_2(t),u_2)$, with $x_1<x_2$.
If $f'(u_1)>f'(u_2)$, the two particles meet in the future, at time $t+\varDelta t_1$,
where $\varDelta t_1>0$ is given by \eqref{eq:intersection_time}.
Let us assume that between the two particles, the solution is continuous at all times
before $t+\varDelta t_1$. This implies that the meeting of the particles is the first
occurrence of a shock, and at this time the solution is a discontinuity at position
$x_{\text{sh}} = x_1+f'(u_1)\varDelta t_1$.
Any point $(x_{\text{sh}},u)$, $u\in [u_1,u_2]$ on the shock can be traced backwards
along its characteristic to \mbox{$(x_{\text{sh}}-f'(u)\varDelta t_1,u)$}.
As shown in Fig.~\ref{fig:def_interp}, this defines the interpolation uniquely as
\begin{equation}
x(u) = x_1+f'(u_1)\varDelta t_1-f'(u)\varDelta t_1
= x_1+\frac{f'(u)-f'(u_1)}{f'(u_2)-f'(u_1)}(x_2-x_1) \;.
\label{eq:interpolation_function}
\end{equation}
In the case of departing particles, i.e.~$f'(u_1)<f'(u_2)$, the above construction is
exactly reversed in space and time. We assume that the solution between the particles
came from a discontinuity at some time in the past. Consequently,
formula \eqref{eq:interpolation_function} also describes this case.
\begin{thm}
\label{thm:particle_movement_solution}
The interpolation \eqref{eq:interpolation_function} between two particles is a solution
of the conservation law \eqref{eq:conservation_law}. Consequently, for multiple
particles, the function defined piecewise by the
interpolation \eqref{eq:interpolation_function} is a classical (i.e.~continuous)
solution to the conservation law \eqref{eq:conservation_law}, until particles collide.
\end{thm}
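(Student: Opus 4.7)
The plan is to recognize the interpolation \eqref{eq:interpolation_function} as a centered self-similar wave and then verify the PDE by implicit differentiation. Let $T = t+\varDelta t_1$ and $x_{\text{sh}} = x_1 + f'(u_1)\varDelta t_1$ denote the space-time meeting point of the two characteristics. Because each of the bounding particles moves along its own characteristic, at any time $\tau$ the endpoints $x_1(\tau)$ and $x_2(\tau)$ still satisfy the geometric construction that produced \eqref{eq:interpolation_function} with the same $(x_{\text{sh}},T)$. Therefore, on the strip between the two particle trajectories, the interpolation is equivalent to the implicit relation
\begin{equation*}
f'\bigl(u(x,\tau)\bigr) = \frac{x - x_{\text{sh}}}{\tau - T}, \qquad \tau \ne T.
\end{equation*}
The first step I would carry out is to verify this rewriting by setting $\tau = t$ and matching it with \eqref{eq:interpolation_function}.

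Next I would differentiate implicitly. Since $f$ is strictly convex or concave on the range in question, $f''(u)\ne 0$, so the implicit function theorem applies and yields
\begin{equation*}
u_x = \frac{1}{f''(u)\,(\tau-T)}, \qquad u_\tau = -\frac{f'(u)}{f''(u)\,(\tau-T)},
\end{equation*}
so that $u_\tau + f'(u)\,u_x = 0$, which is the conservation law in quasilinear form. The sign of $\tau - T$ distinguishes rarefactions ($\tau > T$) from compressions ($\tau < T$) but plays no role in the cancellation, so both regimes are treated in one stroke.

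For the statement about multiple particles, the same argument applies on every interval between two neighboring particles. At each particle position $x_i(\tau)$ the value $u_i$ is carried along the characteristic, and the one-sided limits from both adjacent strips reduce to $u_i$ there, so the global piecewise function is continuous. Since each piece is a classical solution and the interfaces are characteristic curves along which the solution is smooth, the concatenation remains a classical solution of \eqref{eq:conservation_law} until two particles collide. The only subtlety I anticipate is the degenerate case $f'(u_1)=f'(u_2)$: strict convexity or concavity forces $u_1=u_2$, and \eqref{eq:interpolation_function} must be read as the constant interpolant $u \equiv u_1$, which trivially satisfies the PDE. Once the similarity structure is exposed, the remaining steps are routine.
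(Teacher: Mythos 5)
Your proof is correct, but it verifies the PDE along a different axis than the paper does. You invert the interpolant into the Eulerian form $u(x,\tau)$ via the centered-wave relation $f'(u) = (x-x_{\text{sh}})/(\tau-T)$, check (correctly) that the center $(x_{\text{sh}},T)$ is the same at every time $\tau$ because both endpoints ride their own characteristics, and then obtain $u_\tau + f'(u)u_x = 0$ by implicit differentiation. The paper instead keeps the interpolant in the form $x(u,t)$ and shows $\partial x/\partial t = f'(u)$ at fixed $u$, i.e.\ that every level set of $u$ travels at its characteristic speed; this is a one-line computation using $\dot x_i = f'(u_i)$ and, notably, never needs $f''\ne 0$, whereas your implicit-function-theorem step does (harmless here, since strict convexity is assumed throughout, and you correctly flag the degenerate case $u_1=u_2$). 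What your route buys is that the ``similarity solution'' structure is made completely explicit --- rarefactions and compressions are literally the same centered wave on opposite sides of $\tau=T$ --- at the cost of an extra lemma (time-invariance of the center) and the inversion hypothesis. One small point of care: at the particle positions the concatenated solution is continuous but only piecewise $C^1$, so ``remains a classical solution'' should be read in the theorem's sense of a continuous weak solution; the paper makes this precise by treating the kinks as zero-height shocks that trivially satisfy the Rankine--Hugoniot condition, and your argument would be airtight with that one added remark.
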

\begin{proof}
Using that \mbox{$\dot x_i(t)=f'(u_i)$} for~$i=1,2$ (from
\eqref{eq:characteristic_equation}) one obtains
\begin{align*}
\pd{x}{t}(u,t) &= \dot x_1
+\frac{f'(u)-f'(u_1)}{f'(u_2)-f'(u_1)}(\dot x_2-\dot x_1)\\
&= f'(u_1)+\frac{f'(u)-f'(u_1)}{f'(u_2)-f'(u_1)}(f'(u_2)-f'(u_1))
= f'(u) \;.
\end{align*}
Thus every point on the interpolation $u(x,t)$ satisfies the characteristic
equation \eqref{eq:characteristic_equation}. At the particles, the solution possesses
kinks. These can be interpreted as shocks of height zero, and thus satisfy the
Rankine-Hugoniot conditions \cite{Evans1998}. Hence, the full piecewise wave solution
is a weak solution of
\eqref{eq:conservation_law}.
\end{proof}
The interpolation \eqref{eq:interpolation_function} defines either a rarefaction wave
(if $f'(u_1)<f'(u_2)$), or a compression wave (if $f'(u_1)>f'(u_2)$) between two
particles. In the case $u_1=u_2$, a constant interpolant is defined. Hence, at all
times the solution is approximated by similarity waves.
\begin{rem}
That \eqref{eq:interpolation_function} defines the interpolant as a function
$x(u)$ is, in fact, an advantage, since at a discontinuity $x_1 = x_2$, characteristics
for all intermediate values $u$ are defined. Thus, rarefaction fans arise naturally
from discontinuities if $f'(u_1)<f'(u_2)$. If $f$ has no inflection points between
$u_1$ and $u_2$, the inverse function $u(x)$ exists. For plotting purposes we plot
$x(u)$ instead of inverting the function.
\end{rem}

\subsection{Evolution of Area}
The interpolation defines an area between particles.
As shown in \cite{FarjounSeibold2009_2}, the integration of
\eqref{eq:interpolation_function} yields
\begin{equation}
\int_{x_1(t)}^{x_2(t)} u(x,t)\ud{x} = (x_2(t)-x_1(t))\,a(u_1,u_2) \;,
\label{eq:area}
\end{equation}
where $a(u_1,u_2)$ is the nonlinear average function
\begin{equation}
a(u_1,u_2)
= \frac{\brk{f'(u)u-f(u)}_{u_1}^{u_2}}{\brk{f'(u)}_{u_1}^{u_2}}
= \frac{\int_{u_1}^{u_2}f''(u)\,u\ud{u}}{\int_{u_1}^{u_2}f''(u)\ud{u}}\;.
\label{eq:nonlinear_average}
\end{equation}
The integral form shows that $a$ is indeed an average of $u$, weighted by $f''$.
In \cite{FarjounSeibold2009_2} it is shown that $a(u_1,u_2)$ is symmetric, bounded
by its arguments, strictly increasing in both arguments, and continuous at $u_1=u_2$.

The area under the interpolant can also be derived from the conservation law
\eqref{eq:conservation_law} by observing that the change of area between two
characteristic particles $(x_1(t),u_1)$ and $(x_2(t),u_2)$ is given by
\begin{equation}
\begin{split}
\frac{d}{dt}\int_{x_1(t)}^{x_2(t)}u(x,t)\ud{x}
&= \prn{f'(u_2)u_2-f(u_2)}-\prn{f'(u_1)u_1-f(u_1)} \\
&= F(u_2)-F(u_1) = \brk{F}_{u_1}^{u_2}\;,
\end{split}
\label{eq:area_deriv}
\end{equation}
where $F = f'(u)u-f$ is the Legendre transform of the flux function $f$.
Since $u$ is conserved along the characteristics, $F$ is conserved as well.
Thus, the area between particles changes linearly in time, as does their distance
\begin{equation}
\frac{d}{dt}(x_2(t)-x_1(t)) = \dot{x}_2(t)-\dot{x}_1(t) = f'(u_2)-f'(u_1)
= \brk{f'(u)}_{u_1}^{u_2}\;.
\label{eq:distance_deriv}
\end{equation}
As before, these results require that the solution remains continuous until the particles
collide. At the time of collision $t_0$, both the distance and the area vanish.
Thus \eqref{eq:area_deriv} and \eqref{eq:distance_deriv} integrate to
\begin{align*}
\int_{x_1(t)}^{x_2(t)} u(x,t)\ud{x} &= (t-t_0) \cdot \brk{F(u)}_{u_1}^{u_2} \;, \text{ and }\\
x_2(t)-x_1(t) &= (t-t_0) \cdot \brk{f'(u)}_{u_1}^{u_2}\;,
\end{align*}
which lead to \eqref{eq:area}.
\begin{rem}
The definition of area between particles gives rise to an exactly conservative
resolution of particle interaction. This poses an interesting analogy to finite volume
methods. While these are derived by the flux $f$ through fixed boundaries
\begin{equation*}
\frac{d}{dt}\int_{x_1}^{x_2}u(x,t)\ud{x} = -\brk{f}_{u_1}^{u_2}\;,
\end{equation*}
the presented method is based on the Lagrangian flux $F$ through moving boundaries,
as given by \eqref{eq:area_deriv}.
\end{rem}

\begin{figure}
\centering
\begin{minipage}[t]{.49\textwidth}
\centering
\includegraphics[width=0.99\textwidth]{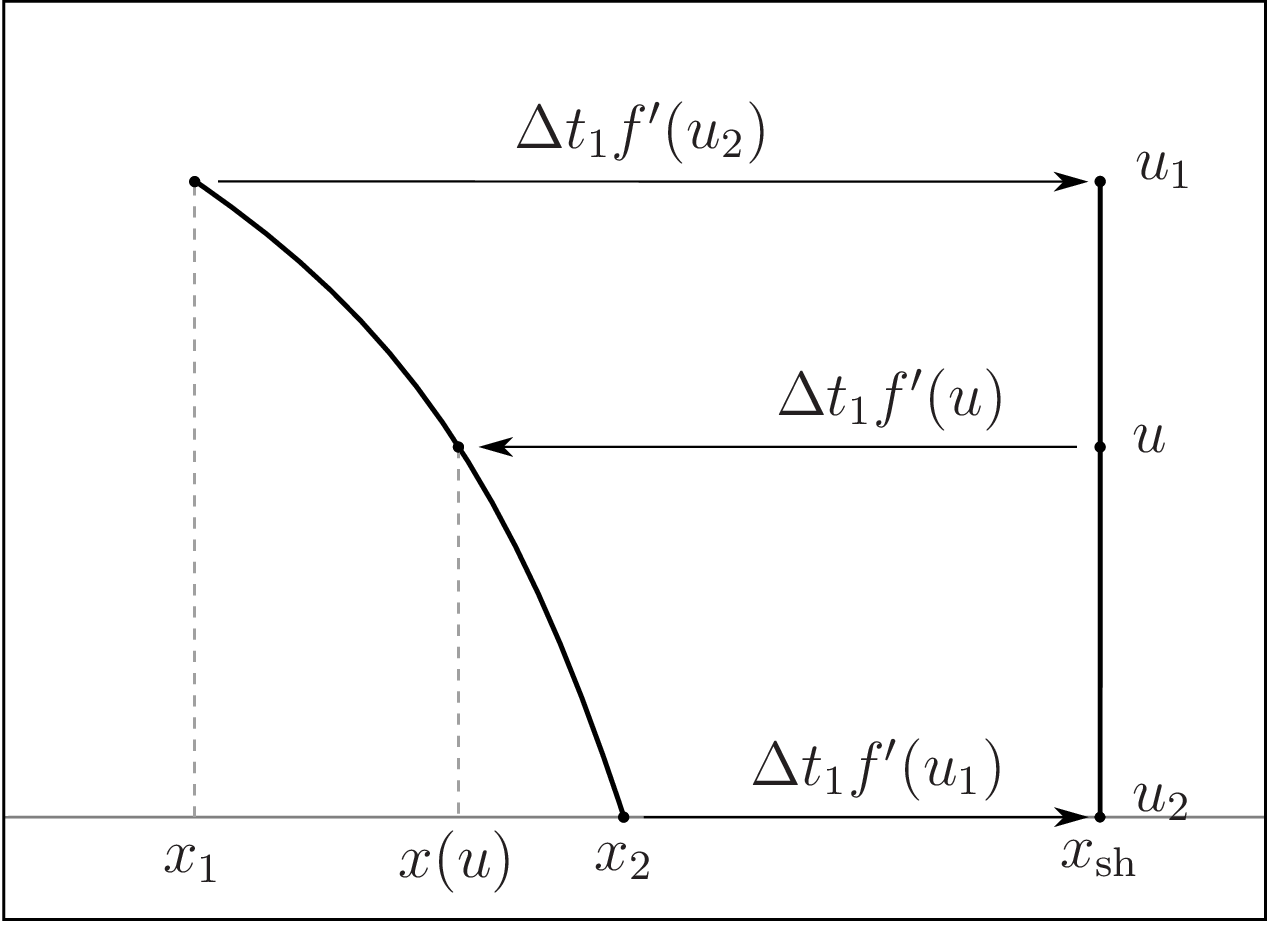}
\caption{Definition of the interpolation. After time $\Delta t_1$, particles 1\&2 form a
shock. From this $x(u)$ is found.}
\label{fig:def_interp}
\end{minipage}
\hfill
\begin{minipage}[t]{.49\textwidth}
\centering
\includegraphics[width=0.99\textwidth]{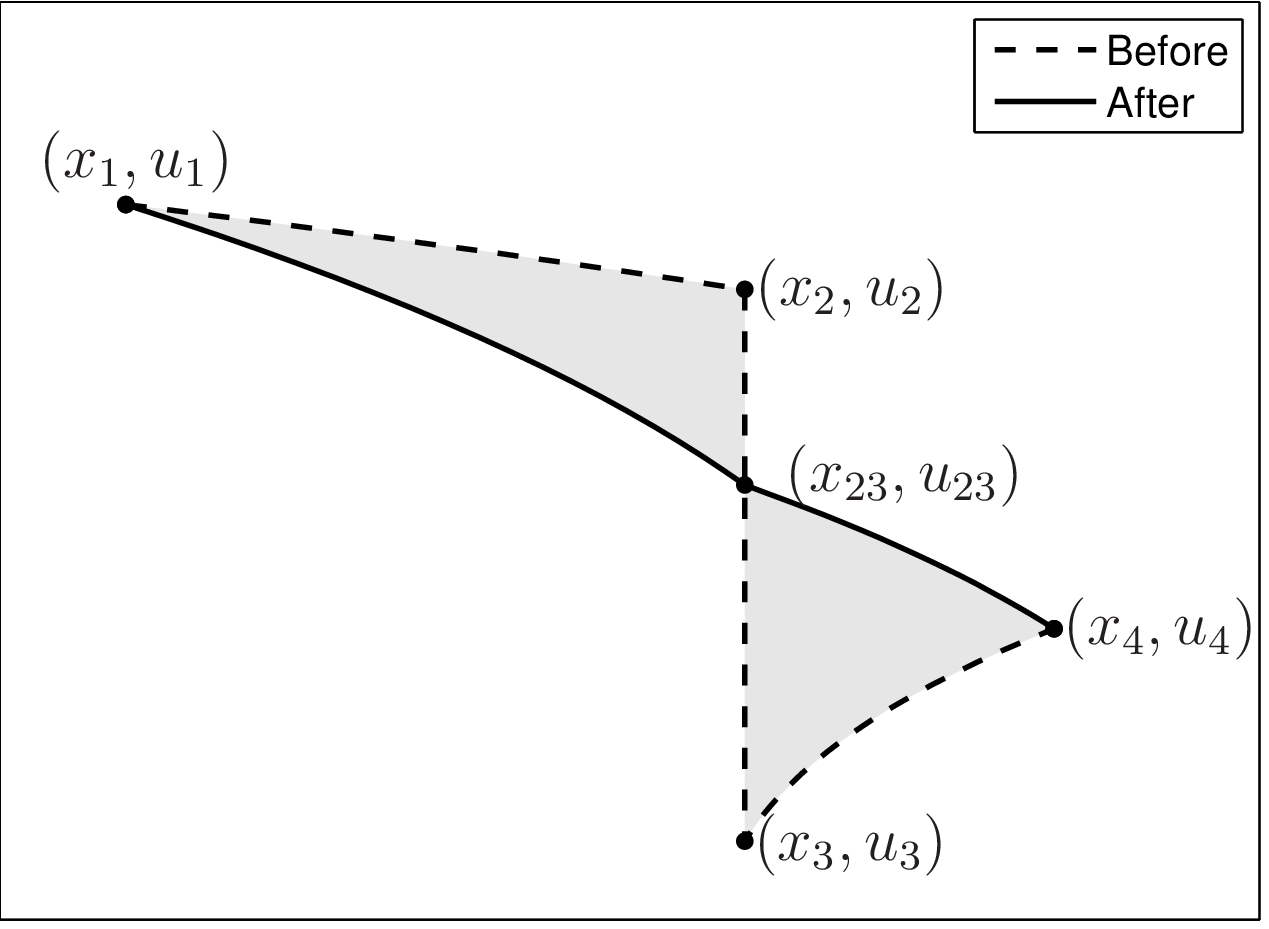}
\caption{Merging particles 2\&3. The values $(x_{23}, u_{23})$ are found so that the area
of the two ``triangles'' is equal.}
\label{fig:merging}
\end{minipage}
\end{figure}

\subsection{Sampling of the Initial Conditions}
\label{subsec:sampling_initial_data}
Given particle positions and function values, the interpolation
\eqref{eq:interpolation_function} defines a function everywhere.
If the initial conditions $u_0(x)$ can be represented exactly by finitely many particles
and the interpolation \eqref{eq:interpolation_function} in between, then the characteristic
evolution yields an \emph{exact} solution of the conservation law. In general, the initial
conditions must be approximated by the piecewise interpolation
\eqref{eq:interpolation_function}. This approximation step is similar to finite element
approaches. The selection of initial particle positions $x_i$ corresponds to the
construction of a mesh. The function values $u_i$ can be chosen to minimize the
approximation error between the exact initial function and the numerical approximation.
However, there are two aspects of the initial sampling that are very specific to
conservation laws and the numerical approach:
\begin{itemize}
\item
The initial function $u_0(x)$ may involve discontinuities. We assume only finitely many
jumps, and that they are known. Each jump can be represented exactly by two particles at
the same position (but with different function values). Having this option is one
definite advantage of particle methods over Eulerian approaches.
\item
\emph{Area} is of great importance for conservation laws. The interpolation yields an exact
knowledge of area in the numerical approximation, and the approximation step of the
initial function can incorporate this aspect, and chose an initial sampling that respects
the area of $u_0(x)$.
\end{itemize}
A simple approach is to represent jumps exactly, and then sample positions $x_i$
equidistantly between jumps while choosing function values $u_i = u_0(x_i)$. Obviously,
better sampling strategies are possible, but this simple approach often yields
satisfying results. In \cite{FarjounSeibold2009_2}, we prove that with this simple
sampling strategy, the piecewise interpolation \eqref{eq:interpolation_function} is a
second order accurate approximation to any $C^2$ function with
$|f''(u_0(x))|\ge C>0$, i.e.~the initial function does not cross nor touch
any inflection point of $f$. For non-convex flux functions, as outlined in
Sect.~\ref{subsec:inflection_points}, the second order accuracy is, in general, lost
when $u_0$ crosses an inflection point of $f$.

\section{Particle Management}
\label{sec:particle_management}
The time evolution of the conservation law \eqref{eq:conservation_law} is described by
the characteristic movement of the particles \eqref{eq:characteristic_equation}.
Particle management is an ``instantaneous'' operation (i.e.~happening at constant time)
that allows the method to continue an evolution forward in time. It is designed to
conserve area: The function value of an inserted or merged particle is chosen so that
the total area is unchanged by the operation.

Consider four neighboring particles located at
\mbox{$x_1<x_2\le x_3<x_4$}\footnote{If more than two particles are at one position ($x$),
all but the one with the smallest value ($u$) and the one with the largest value ($u$)
are removed immediately.}
with associated function values $u_1$, $u_2$, $u_3$, $u_4$, and $u_2\neq u_3$.
There are two possible cases that require particle management between $x_2$ and $x_3$:
\begin{itemize}
\item \textbf{Insertion:}
If two particles deviate from each other ($f'(u_2)<f'(u_3)$), their distance may become
unacceptably large: $x_3-x_2\ge d_\text{max}$. In this case, we insert a new particle
$(x_{23},u_{23})$ with $x_2<x_{23}<x_3$, such that the area is preserved:
\begin{equation}
(x_{23}-x_2)\,a(u_2,u_{23})+(x_3-x_{23})\,a(u_{23},u_3) = (x_3-x_2)\,a(u_2,u_3) \;.
\label{eq:area_cond_insert}
\end{equation}
One can, for example, set $x_{23}=\frac{x_2+x_3}{2}$ and find $u_{23}$
by \eqref{eq:area_cond_insert}, or set $u_{23}=\frac{u_2+u_3}{2}$ and find $x_{23}$
by \eqref{eq:area_cond_insert}.
\item \textbf{Merging:}
If two particles approach each other ($f'(u_2)>f'(u_3)$), they will eventually collide:
$x_3-x_2\le d_\text{min}$. In this case, we replace both with a new particle
$(x_{23},u_{23})$ at $x_{23}=\frac{x_2+x_3}{2}$, with $u_{23}$ chosen, such that the
area is preserved:
\begin{equation}
\begin{split}
(x_{23}-x_1)\,a(u_1,u_{23})+(x_4-x_{23})\,a(u_{23},u_4)& \\
= (x_2\!-\!x_1)\,a(u_1,u_2)+(x_3\!-\!x_2)\,a(u_2,u_3)&+(x_4\!-\!x_3)\,a(u_3,u_4)\;.
\end{split}
\label{eq:area_cond_merge}
\end{equation}
Figure~\ref{fig:merging} illustrates this merging step.
The merging step is extended by an entropy fix, as described in
Sect.~\ref{subsec:entropy_fix}.
\end{itemize}
The two distance parameters have very natural meanings for the method. The maximum
distance $d_\text{max}$ is the resolution of the method. Due to insertion, the distance
between particles on rarefactions is always between 
$d_\text{max}/2$ and $d_\text{max}$. The minimum distance $d_\text{min}$ is ideally
zero, therefore it is not a real parameter of the method. For stability reasons, one
can assign a small positive value. This is of interest in extensions to balance laws,
in which case the characteristic equations may have to be approximated numerically.

We observe that inserting and merging are similar in nature.
Conditions \eqref{eq:area_cond_insert} and \eqref{eq:area_cond_merge} for $u_{23}$ are
nonlinear (unless $f$ is quadratic, see Rem.~\ref{rem:quadratic_flux}).
For most cases, $u_{23}=\frac{u_2+u_3}{2}$ is a good initial guess, and the correct value
can be obtained (up to the desired precision) by a few Newton iteration steps
(or bisection, if the Newton iteration fails to converge).
It is shown in \cite{FarjounSeibold2009_2}, that the new particle value $u_{23}$ for
intersection and merging always exists, is unique, and bounded by the surrounding
particle values $u_{23}\in [u_2,u_3]$. For merging, the latter result is easy to show
when $x_2=x_3$, but it also holds when the slope
$|u_3-u_2|/|x_3-x_2|$ is larger than a given threshold (which depends on the
surrounding particle geometry $x_1,\dots,x_4$, $u_1,\dots,u_4$).
Thus, the merging step is robust with respect to small deviations in the distance of
the merged particles. This also holds for the case $x_2>x_3$, i.e.~merging can in
principle be performed \emph{after} two particles have overtaken.

\subsection{Entropy Fix for Merging}
\label{subsec:entropy_fix}
The merging of particles resolves shocks, which are moving discontinuities. A weak
formulation of the conservation law \eqref{eq:conservation_law} admits these solutions.
For some initial conditions, the weak solution concept allows for infinitely many
solutions, and an additional condition has to be imposed to single out a unique solution.
This can be done by defining a convex entropy function $\eta$, and an entropy flux $q$,
and requiring the \emph{entropy condition}
\begin{equation}
\eta(u)_t+q(u)_x\le 0
\label{eq:entropy_condition}
\end{equation}
to be satisfied. This condition is equivalent to obtaining shocks only when
characteristics run into them \cite{Evans1998}.
It is shown in \cite[Chap.~2.1]{HoldenRisebro2002} that if \eqref{eq:entropy_condition}
is satisfied for the Kru{\v z}kov entropy pair $\eta_k(u) = \abs{u-k}$,
$q_k(u) = \sign(u-k)(f(u)-f(k))$, for all $k$, then it is satisfied for any convex
entropy function.

Relation \eqref{eq:entropy_condition} implies that the total entropy
$\int\eta(u(x,t))\ud{x}$ of a solution does not increase in time.
Particle merging changes the solution locally. It is designed to conserved area
exactly. Similarly, we require that the merging step not increase the total entropy.
In \cite{FarjounSeibold2009_2}, it is shown that this property is satisfied if
shocks are reasonably well resolved.
\begin{lem}
\label{lem:entropy_merging}
Let $x_1<x_2=x_3<x_4$ be the locations of four particles, with particles
$2$ and $3$ to be merged, thus $u_2>u_3$ (if $f''>0$), respectively
$u_2<u_3$ (if $f''<0$). If the value $u_{23}$ resulting from the merge satisfies
$u_1\ge u_{23}\ge u_4$ (if $f''>0$), respectively $u_1\le u_{23}\le u_4$ (if $f''<0$),
then the Kru{\v z}kov entropy $\int\abs{u-k}\ud{x}$ does not increase due to the merge.
\end{lem}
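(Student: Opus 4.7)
The plan is as follows. By the $u\mapsto -u$ symmetry of the scalar conservation law I may assume WLOG that $f''>0$, in which case $f'$ is strictly increasing, the merging condition reads $u_2>u_3$, and the lemma's hypothesis becomes $u_4\le u_{23}\le u_1$. Using the identity $|u-k|=(u-k)+2(k-u)^+$ together with the two conservation properties of the merge, namely of $\int_{x_1}^{x_4}u\,dx$ (precisely the defining condition \eqref{eq:area_cond_merge} of $u_{23}$) and of the length $x_4-x_1$, the lemma reduces to showing
\[
J(k):=\int_{x_1}^{x_4}(k-u_{\text{new}})^+\,dx-\int_{x_1}^{x_4}(k-u_{\text{old}})^+\,dx\;\le\; 0
\]
for every $k\in\mathbb{R}$.

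Differentiating in $k$ gives $J'(k)=F_{\text{new}}(k)-F_{\text{old}}(k)$, where $F_u(k):=|\{x\in[x_1,x_4]:u(x)<k\}|$ is the level-set measure. Note that $J(k)=0$ for $k$ below the minima of both $u_{\text{old}}$ and $u_{\text{new}}$, and $J(k)\to 0$ as $k\to+\infty$ by area conservation. It therefore suffices to establish the \emph{single-crossing property}
\[
F_{\text{new}}(k)\le F_{\text{old}}(k)\ \ \text{for }k\le u_{23},
\qquad
F_{\text{new}}(k)\ge F_{\text{old}}(k)\ \ \text{for }k\ge u_{23},
\]
which forces $J$ to decrease monotonically from $0$ to $J(u_{23})\le 0$ and then rise monotonically back to $0$.

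The single-crossing property is proved by direct inspection of both CDFs from the explicit interpolant \eqref{eq:interpolation_function}. Since $f''>0$, each interpolation piece is strictly monotone in $u$: on $[x_1,x_2]$ (note that $x_{23}=x_2=x_3$ in the merging case) the new solution sweeps monotonically through $[u_{23},u_1]$ and the old sweeps through the interval spanned by $u_1$ and $u_2$; analogous statements hold on $[x_2,x_4]$. For each fixed $k$ the level set $\{u<k\}$ therefore consists of at most two sub-intervals, and a band-by-band comparison in $k$, using the hypothesis $u_4\le u_{23}\le u_1$ together with the monotonicity of the right-hand side of \eqref{eq:interpolation_function} in its endpoint value, yields both halves of the single-crossing inequality and the coincidence $F_{\text{new}}(u_{23})=F_{\text{old}}(u_{23})=x_4-x_2$.

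The main obstacle is bookkeeping: the orderings of $(u_1,u_2)$ and of $(u_3,u_4)$ are not fixed by the hypothesis (only $u_2>u_3$ and $u_4\le u_{23}\le u_1$ are given), so the left and right pieces of $u_{\text{old}}$ may each be either a rarefaction or a compression, producing several sub-cases. In each sub-case the argument is structurally identical and differs only in which of the band comparisons degenerate. The specific form of the nonlinear average $a$ in \eqref{eq:nonlinear_average} enters only through \eqref{eq:area_cond_merge}, used solely to anchor $J(+\infty)=0$; the single-crossing property itself relies only on the monotonicity built into \eqref{eq:interpolation_function}.
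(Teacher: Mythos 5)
The paper itself does not prove this lemma (it defers to \cite{FarjounSeibold2009_2}), so I can only judge your argument on its merits. Your overall strategy is sound and attractive: the identity $\abs{u-k}=(u-k)+2(k-u)^+$ together with area conservation and the fixed endpoints $x_1,x_4$ correctly reduces the claim to $J(k)\le 0$, the anchoring $J(-\infty)=J(+\infty)=0$ is right, and the single-crossing property of the level-set measures at $k=u_{23}$ would indeed finish the proof.

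There is, however, a genuine gap in the single-crossing step, and it sits exactly where you wave it away: you assert that the area condition \eqref{eq:area_cond_merge} is ``used solely to anchor $J(+\infty)=0$'' and that single-crossing follows from $u_2>u_3$ and $u_4\le u_{23}\le u_1$ alone. That is false. Take $f(u)=\tfrac12 u^2$ (so $f'=\mathrm{id}$), $\ell=r=1$, and $u_1=3$, $u_2=1$, $u_3=0$, $u_4=-1$, $u_{23}=2$: all of your stated hypotheses hold, yet at $k=2.5>u_{23}$ the left piece gives $F_{\text{new}}=0.5<0.75=F_{\text{old}}$ while the right pieces both contribute $r$, so $F_{\text{new}}(k)<F_{\text{old}}(k)$ for $k>u_{23}$, violating the second half of single-crossing; note also that your claimed coincidence $F_{\text{new}}(u_{23})=F_{\text{old}}(u_{23})=x_4-x_2$ already presupposes that the old left piece is entirely above $u_{23}$ and the old right piece entirely below it, i.e.\ $u_3\le u_{23}\le u_2$. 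The missing ingredient is precisely this bound $u_{23}\in[u_3,u_2]$, which does \emph{not} follow from the integrated form of area conservation plus $u_4\le u_{23}\le u_1$, but does follow from the equation \eqref{eq:area_cond_merge} itself together with the monotonicity and boundedness of the average $a$ (as stated in Sect.~\ref{sec:particle_management}): with $x_2=x_3=x_{23}$ the left side of \eqref{eq:area_cond_merge} is strictly increasing in $u_{23}$ and brackets the right side at $u_{23}=u_3$ and $u_{23}=u_2$. Once you add this fact, the band-by-band comparison closes cleanly: both left pieces vanish for $k\le u_{23}$ and satisfy $F_{\text{new}}\ge F_{\text{old}}$ throughout, both right pieces equal $x_4-x_2$ for $k\ge u_{23}$ and satisfy $F_{\text{new}}\le F_{\text{old}}$ throughout, which yields single-crossing in every sub-case. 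So the proof is repairable, but as written the key inequality is not a consequence of the hypotheses you allow yourself.
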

This entropy condition is ensured by an \emph{entropy fix}:
A merge is rejected \emph{a posteriori} if the condition is not satisfied.
In this case, points are inserted half-way between the shock and the
neighboring particles, and the merge is re-attempted.
The condition is satisfied if new particles are inserted sufficiently
close to the shock, hence this approach is guaranteed to succeed.

\subsection{Shock Postprocessing}
\label{subsec:shock_postprocessing}
The particle method does not explicitly track shocks. However, shocks can be located.
Whenever particles are merged, the new particle can be marked as a \emph{shock particle}.
Thus, any shock stretches over three particles $(x_1,u_1),(x_2,u_2),(x_3,u_3)$, with the
shock particle in the middle. Before plotting or interpreting the solution, a postprocessing
step can be performed: The shock particle is replaced by a discontinuity, represented
by two particles $(\bar{x}_2,u_1),(\bar{x}_2,u_3)$, with their position $\bar{x}_2$
chosen such that area is conserved exactly. This step is harmless, since an immediate
particle merge would recover the original configuration. The numerical results
in Sect.~\ref{subsec:numerics_convergence_error} indicate that, with this postprocessing,
the resulting solution is second order accurate in $L^1$. In particular,
the shock is located with second order accuracy.

\section{Properties of the Method}
\label{sec:properties}
The presented method consists of two ingredients: time evolution by characteristic
particle movement, and particle management at stationary time. Due to the concept of
similarity solutions between particles, an exact solution of the conservation law
\eqref{eq:conservation_law} is evolved during particle movement
(see Thm.~\ref{thm:particle_movement_solution}).
\begin{thm}[TVD and entropy diminishing]
Solutions obtained by the particle method are total variation diminishing and
satisfy the entropy condition \eqref{eq:entropy_condition}. Hence, no spurious
oscillations are created, and correct entropy solutions are approximated.
\end{thm}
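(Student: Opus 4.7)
The plan is to reduce both claims to facts already established in the paper by analyzing the two types of operations the method performs: characteristic time evolution and particle management (insertion followed by merging). Since the total variation of the piecewise interpolant is $\sum_i |u_{i+1}-u_i|$ (each similarity wave between neighbors is monotone in $u$ because $f''$ has fixed sign), and since the Kru\v{z}kov entropy integral of the piecewise interpolant can be computed from \eqref{eq:area} applied to $|u-k|$ via the chain, it suffices to check that neither TV nor entropy increases during either operation.

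First I would handle the characteristic evolution phase. By Thm.~\ref{thm:particle_movement_solution}, the piecewise interpolation \eqref{eq:interpolation_function} is a classical solution of \eqref{eq:conservation_law} between particles until they collide. The particle values $u_i$ are invariant along characteristics and the order $x_i(t)<x_{i+1}(t)$ is preserved until the collision time \eqref{eq:intersection_time}. Hence $\sum_i|u_{i+1}(t)-u_i(t)|$ is constant, and \eqref{eq:entropy_condition} holds pointwise with equality for a classical solution, so both properties are preserved strictly during time stepping.

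Next I would address insertion. The key observation is that placing $(x_{23},u_{23})$ on the existing similarity wave between $(x_2,u_2)$ and $(x_3,u_3)$ automatically satisfies \eqref{eq:area_cond_insert}: the original area $(x_3-x_2)\,a(u_2,u_3)$ equals the sum of the areas of the two sub-similarity-waves, since each sub-arc is itself the interpolation \eqref{eq:interpolation_function} between its endpoints. Because $a$ is strictly monotone in its arguments, the uniqueness result of~\cite{FarjounSeibold2009_2} applies and $u_{23}$ is the only solution of \eqref{eq:area_cond_insert} for the chosen $x_{23}$. Therefore insertion does not alter the represented function at all, so TV and entropy are trivially unchanged; in particular $u_{23}\in[u_2,u_3]$, so even if one merely uses that bound, $|u_{23}-u_2|+|u_3-u_{23}|=|u_3-u_2|$.

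For merging I would invoke Lem.~\ref{lem:entropy_merging} together with the entropy fix of Sect.~\ref{subsec:entropy_fix}, which guarantees (assuming $f''>0$, the other case being symmetric) that $u_1\ge u_{23}\ge u_4$ for the merged particle. The TV contribution then collapses from $|u_2-u_1|+|u_3-u_2|+|u_4-u_3|$ to $|u_{23}-u_1|+|u_4-u_{23}|=|u_4-u_1|$, which is bounded above by the former by the triangle inequality; summing over all merges shows TVD globally. For the entropy inequality, Lem.~\ref{lem:entropy_merging} gives $\int|u-k|\,dx$ non-increasing for every $k$, and by the Kru\v{z}kov-to-convex-entropy passage cited from \cite[Chap.~2.1]{HoldenRisebro2002} this implies \eqref{eq:entropy_condition} for any convex entropy pair. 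Combining the three regimes gives TVD and the entropy inequality for the full evolution, whence the approximation selects the correct entropy solution and no spurious oscillations are generated.

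The main obstacle I expect is making the insertion argument rigorous: one must carefully invoke the uniqueness of $u_{23}$ in \eqref{eq:area_cond_insert} to conclude that area preservation forces the inserted point onto the existing similarity wave. Everything else reduces cleanly to triangle inequality (TVD at merges) and to the cited Lem.~\ref{lem:entropy_merging} enforced by the entropy fix.
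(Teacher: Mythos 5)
Your proposal is correct and follows essentially the same route as the paper's proof: decompose the method into characteristic evolution (exact classical solution by Thm.~\ref{thm:particle_movement_solution}, so TV and entropy are constant), insertion (which merely refines the interpolant without changing the represented function), and merging (TVD from the boundedness of $u_{23}$, entropy from Lem.~\ref{lem:entropy_merging} together with the entropy fix). You simply make explicit two steps the paper leaves implicit --- the uniqueness argument showing the inserted particle lands on the existing similarity wave, and the triangle-inequality computation for the total variation at a merge --- which is a welcome elaboration rather than a different approach.
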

\begin{proof}
Due to Thm.~\ref{thm:particle_movement_solution}, the characteristic particle movement
yields an exact classical solution. Thus, both the total variation and the entropy of
the solution are constant. Particle insertion simply refines the interpolation, without
changing the solution. Particle merging is shown to yield a new value bounded by the
surrounding particle values (Sect.~\ref{sec:particle_management}), thus it is TVD.
The entropy fix (Sect.~\ref{subsec:entropy_fix}) ensures the entropy condition is
satisfied.
\end{proof}
\begin{rem}[Quadratic flux function]
\label{rem:quadratic_flux}
The method is particularly efficient for quadratic flux functions.
In this case the interpolation \eqref{eq:interpolation_function} between two
points is a straight line, and the average \eqref{eq:nonlinear_average} is the
arithmetic mean $a(u_1,u_2) = \frac{u_1+u_2}{2}$. Thus, the function values for
particle management can be computed explicitly.
\end{rem}
\begin{rem}[Computational Cost]
An interesting aspect arises in the computational cost of the method, when counting
evaluations of the flux function $f$ and its derivatives $f'$, $f''$. Particle movement
does not involve any evaluations since the characteristic velocity
of each particle $f'(u_i)$ does not change. Consider a solution on $t\in [0,1]$ with
a bounded number of shocks, to be approximated by $O(n)$ particles.
Every particle merge and insertion requires $O(1)$ evaluations.
After each time increment \eqref{eq:time_step}, $O(1)$ management steps are
required. The total number of time increment steps is $O(n)$.
Thus, the total cost is $O(n)$ evaluations, as opposed to $O(n^2)$ evaluations for
Godunov-type methods. Note that this aspect is only apparent if evaluations of
$f'$, $f''$ are expensive, since the total number of operations is still $O(n^2)$.
\end{rem}

\section{Fundamental Extensions and Generalizations}
\label{sec:extensions}
The basic method, presented in the previous sections, is formulated for scalar
one-dimensional hyperbolic conservation laws with a flux function that is convex or
concave. In this section, we present how some of these restrictions can be relaxed.
In Sect.~\ref{subsec:inflection_points}, we describe the treatment of flux functions with
one inflection point. Section~\ref{subsec:source_terms} addresses the incorporation of
simple source terms.

\subsection{Non-Convex Flux Functions}
\label{subsec:inflection_points}
The key idea in generalizing the method for flux functions that have inflection points is
to carry \emph{inflection particles}, such that between neighboring particles, the flux
function is always either convex or concave. At inflection particles, the interpolant
\eqref{eq:interpolation_function} has an infinite slope (since $f''=0$ at this point),
which poses some difficulty for a high order approximation of initial functions that cross
an inflection point \cite{FarjounSeibold2009_2}. The characteristic particle movement, and
the expressions for area and average \eqref{eq:nonlinear_average} apply as before. However,
the merging of particles when an inflection particle is involved requires a special
treatment. The standard approach, as presented in Sect.~\ref{sec:particle_management},
replaces two particles by one particle of a different function value. If an inflection
particle is involved in a collision, the merging must be done in a different way so that
an inflection particle remains.

The key idea is to move the inflection particle's position, while preserving its function
value. Depending on the configuration, one of three different merging strategies has to be
performed. Figure~\ref{fig:particle_management_inflection} shows these merging strategies
for the case of a single inflection point (we do not consider here the interaction of
multiple inflection points), with $f'''>0$ (the inflection particle is the slowest), and
particles 2 and 3 to be merged. The other cases are simple symmetries of this situation.
Each of the following strategies is attempted if the previous one failed:
\begin{enumerate}
\item
Remove particle 2 and increase $x_3$ such that area is preserved.
Accept if $x_3$ need not be increased beyond $x_4$.
\item
Remove particle 2, set $x_3=x_4$ and increase both such that area is preserved.
Accept if new value for $x_3$ and $x_4$ is smaller than  $x_5$.
\item
Remove particle 4, set $x_3=x_5$ and lower $u_2$ such that area is preserved.
\end{enumerate}
It is shown in \cite{FarjounSeibold2009_2} that one of these three options must be
accepted. Note that the final configuration may involve a new discontinuity
($x_3=x_4$ or $x_3=x_5$). However, this is not a shock, but rather a rarefaction, and the
particles will move away from each other. Consequently, these particles should \emph{not}
be merged. The presented strategy guarantees that in each merging step one particle is
removed.

\begin{figure}
\centering
\begin{minipage}[t]{.32\textwidth}
\centering
\includegraphics[width=.99\textwidth]{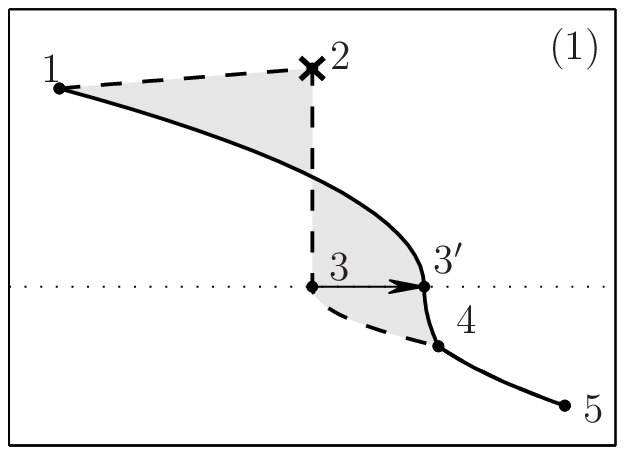}
\end{minipage}
\hfill
\begin{minipage}[t]{.32\textwidth}
\centering
\includegraphics[width=.99\textwidth]{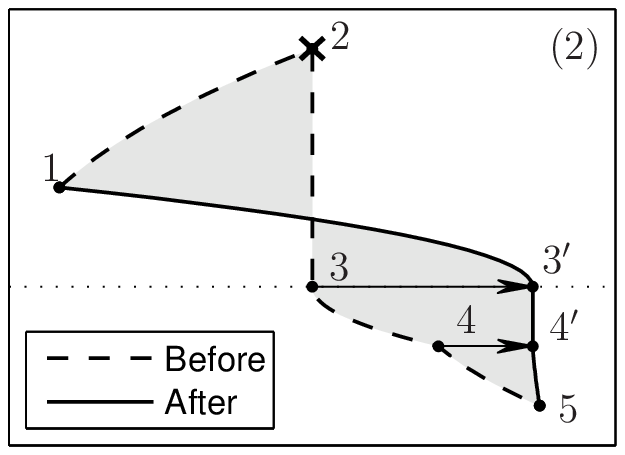}
\end{minipage}
\hfill
\begin{minipage}[t]{.32\textwidth}
\centering
\includegraphics[width=.99\textwidth]{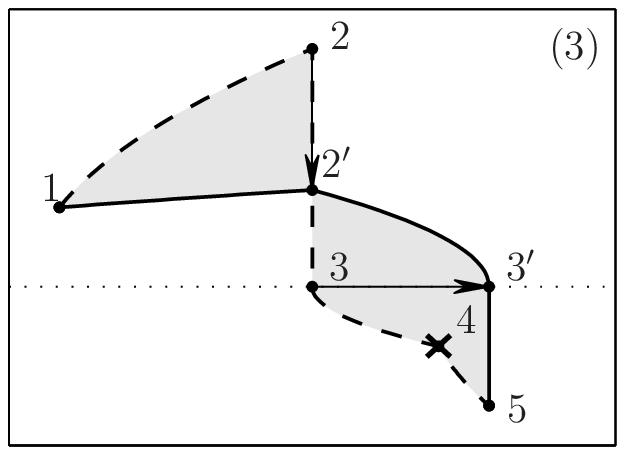}
\end{minipage}
\caption{Particle management around an inflection particle ($f''(u_3)=0$)}
\label{fig:particle_management_inflection}
\end{figure}

\subsection{Simple Source Terms}
\label{subsec:source_terms}
Source terms extend the conservation law \eqref{eq:conservation_law} by a nonzero right
hand side
\begin{equation}
u_t+f(u)_x = \S u\;,
\label{eq:conservation_source}
\end{equation}
where $\S$ is an operator on $u$.
Here we consider the case where the source term is a simple function
$\S u = g(x,u)$, as opposed to a differential or integral operator.
An example of $\S$ being an integral operator is outlined in
Sect.~\ref{subsec:source_intergral}.

If the source is a simple function $\S u = g(x,u)$, equation
\eqref{eq:conservation_source} becomes a balance law. In this case, the method of
characteristics \cite{Evans1998} applies. The presented particle method can be
generalized as follows. During the particle movement, the characteristic equations
\begin{equation}
\begin{cases}
\dot x = f'(u) \\
\dot u = g(x,u)
\end{cases}
\label{eq:method_of_characteristics_source}
\end{equation}
are solved numerically, for instance by an explicit Runge-Kutta scheme. Merging takes
place when two particles are too close. Particle management is performed according
to \eqref{eq:area_cond_insert} and \eqref{eq:area_cond_merge}, as derived for the
source-free case. This is an approximation, since the interpolation
\eqref{eq:interpolation_function} is \emph{not} a solution of the balance law
\eqref{eq:conservation_source}. If the advection dominates over the source, we expect the
approximation to be very accurate. The numerical results in
Sect.~\ref{subsec:numerics_source_terms} indicate that this approach can yield good
results even when the source and the advection term are of comparable magnitude.
The solution is found to have second order convergence, and accuracy that compares
favorably to CLAWPACK \cite{Clawpack}.

\begin{figure}
\centering
\begin{minipage}[t]{.32\textwidth}
\centering
\includegraphics[width=0.99\textwidth]{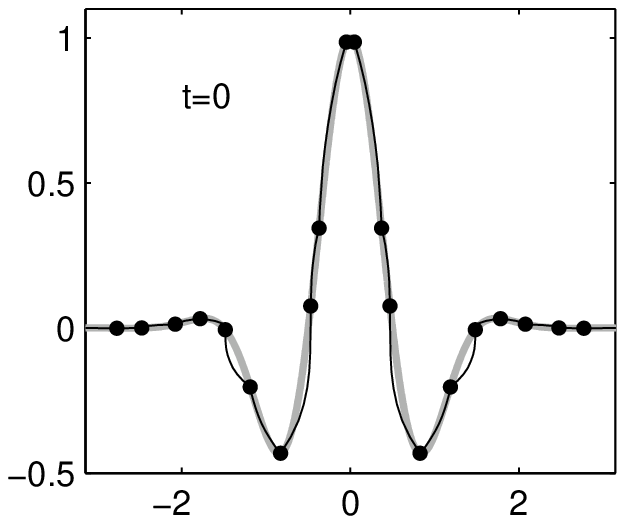}
\end{minipage}
\hfill
\begin{minipage}[t]{.32\textwidth}
\centering
\includegraphics[width=0.99\textwidth]{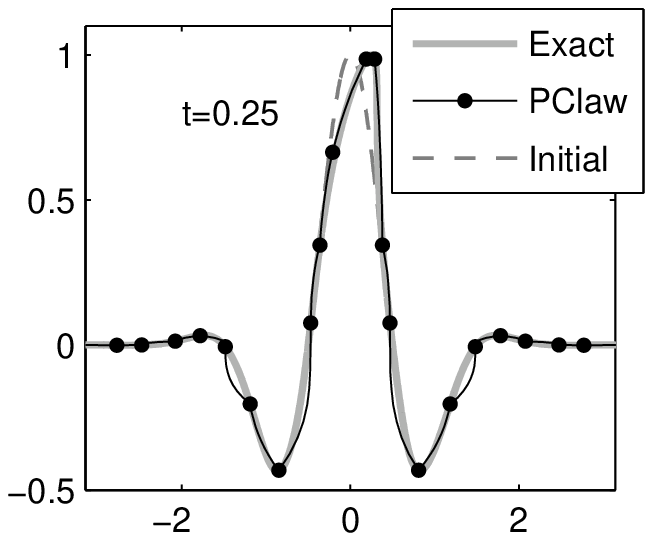}
\end{minipage}
\hfill
\begin{minipage}[t]{.32\textwidth}
\centering
\includegraphics[width=0.99\textwidth]{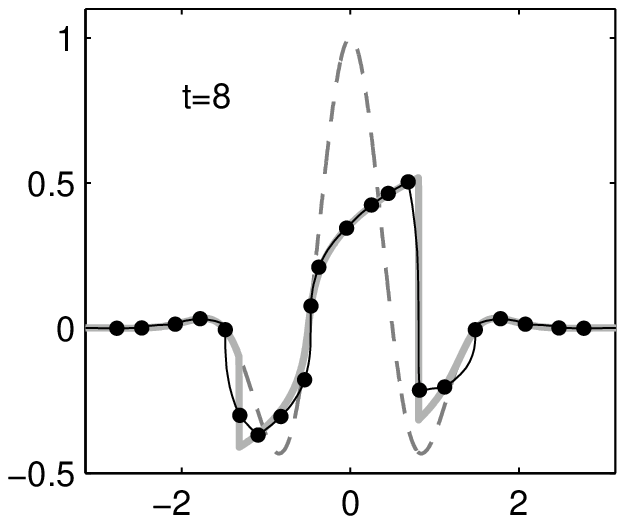}
\end{minipage}
\caption{The particle method for $f(u)=\tfrac{1}{4}u^4$ before and after shocks arise}
\label{fig:u4}
\end{figure}

\section{Numerical Examples}
\label{sec:numerical_examples}
The particle method is numerically investigated in various examples. In all cases, the
reference solution is obtained by a high resolution CLAWPACK \cite{Clawpack} computation. 
A more rigorous exposition of the order of convergence and comparisons with other methods
are presented in \cite{FarjounSeibold2009_2}.

In Sect.~\ref{subsec:numerics_convergence_error}, the evolution and the formation of shocks
for smooth initial data under a convex flux function are presented. The reduction of order
due to a shock (if no postprocessing is done) is shown, as is the effectiveness of the
postprocessing in recovering the second-order accuracy.
In Sect.~\ref{subsec:numerics_non_convex_flux}, the Buckley-Leverett equation is
considered, as an example of a non-convex flux function.
In Sect.~\ref{subsec:numerics_source_terms}, Burgers' equation with a source is simulated.
The source code and all presented examples can be found on the \texttt{particleclaw} web
page \cite{Particleclaw}.

\subsection{The Formation of Shocks}
\label{subsec:numerics_convergence_error}
Figure~\ref{fig:u4} shows the smooth initial function
$u_0(x) = \exp\!\prn{-x^2}\cos(\pi x)$, and its time evolution under the flux
function $f(u)=\frac{1}{4}u^4$. Note the curved shape of the
interpolation \eqref{eq:interpolation_function}. Initially, we sample points on the
function $u_0$. At time $t=0.25$, the solution is still smooth, thus the particles lie
exactly on the solution. At time $t=8$, shocks and rarefactions have occurred and
interacted. Although the numerical solution uses only a few points, it represents the
true solution well.

\begin{figure}
\centering
\begin{minipage}[t]{.48\textwidth}
\centering
\includegraphics[width=.99\textwidth]{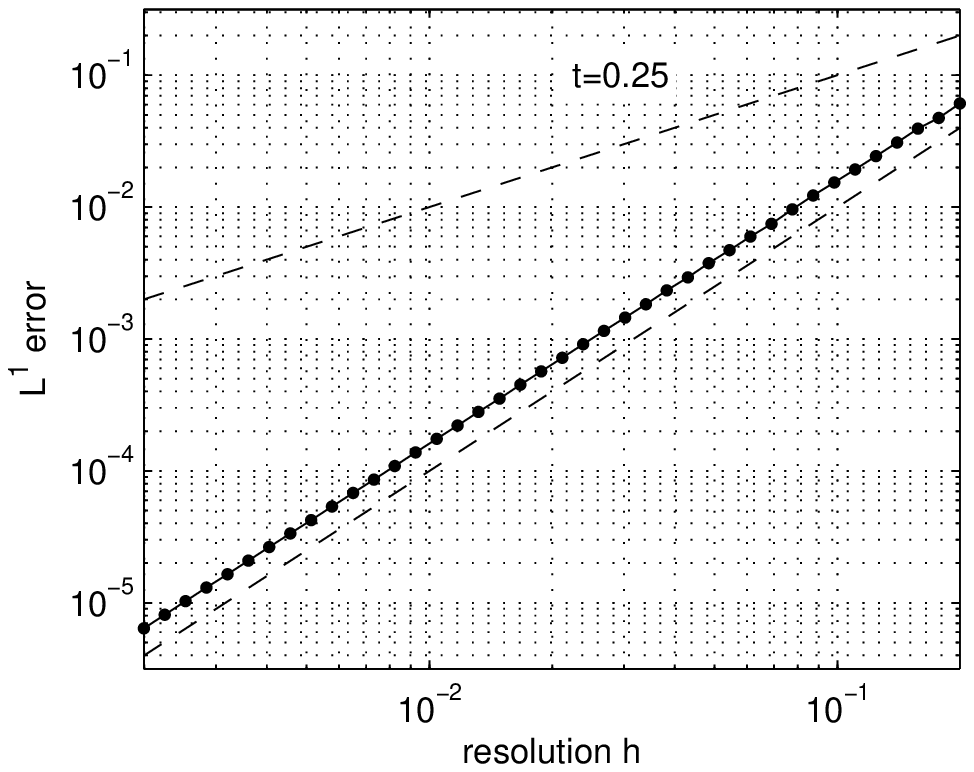}
\end{minipage}
\hfill
\begin{minipage}[t]{.48\textwidth}
\centering
\includegraphics[width=.99\textwidth]{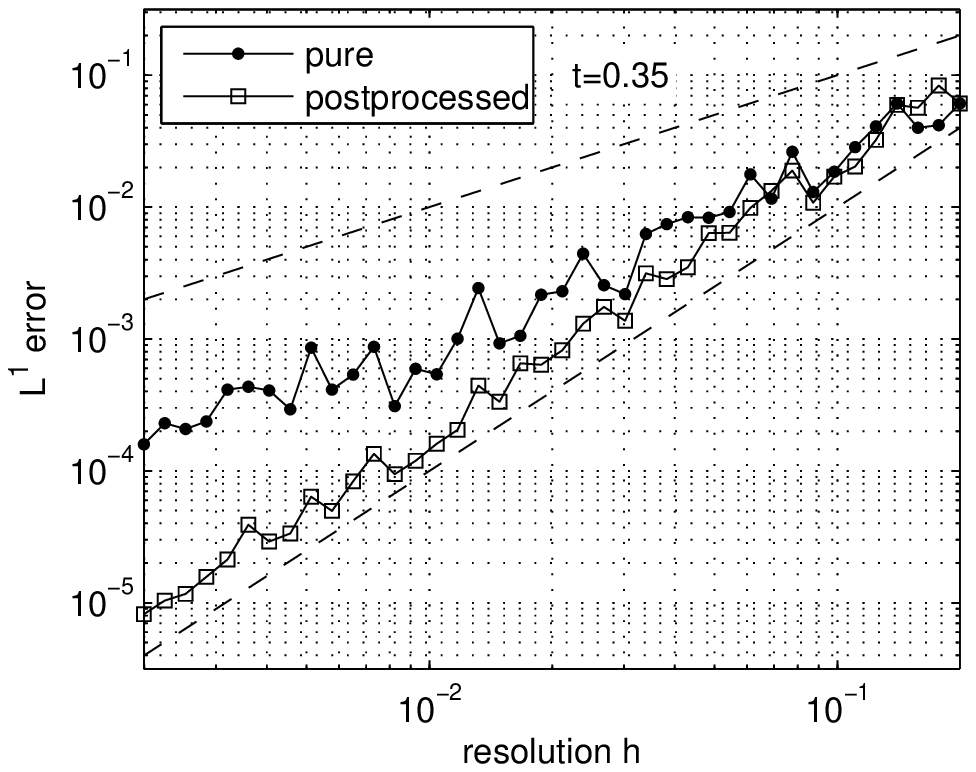}
\end{minipage}
\caption{$L^1$ Error of the particle method for $f(u)=\frac14u^4$, before and after a
shock. In the left plot the error is \emph{only} due to the initial sampling. In the
right plot the advantage of ``postprocessing'' is made apparent, as it converts the
first-order accuracy of the ``pure'' method, into second-order.}
\label{fig:error}
\end{figure}

The error for different numbers of particles is shown in Fig.~\ref{fig:error}. Before shocks
have emerged, the solution is smooth and the error is only due to the initial sampling,
and therefore second-order. After a shock has emerged, the situation is not so simple:
the exact solution has a jump discontinuity, and we measure the error according to the
$L^1$ norm of the difference between the interpolation and the ``exact'' solution.
Therefore, since we normally have just one particle near the shock, we will make an
error proportional to the height$\times$width of the shock. Since the width of the shock
is generally proportional to the resolution of the method, we get first order accuracy.
However, since we have the correct \emph{area}, the postprocessing step, explained
in Sect.~\ref{subsec:shock_postprocessing}, recovers the second order accuracy of this
solution, as is evident from the figure.

\subsection{A Non-Convex Flux Function}
\label{subsec:numerics_non_convex_flux}
As an example of a non-convex flux function, we consider the Buckley-Leverett equation,
which is a simple model for two-phase fluid flow in a porous medium
(see LeVeque \cite[Chapter 16]{LeVeque2002}). The flux function is 
\begin{equation}
f(u) = u^2/(u^2+\tfrac{1}{2}(1-u)^2)\;.
\end{equation}
We consider piecewise constant initial data with a large downward jump that crosses the
inflection point $u^* = 0.387$, and a small upward jump:
\begin{equation}
u_0(x) = \begin{cases}
1 & x\le1\\ 
0 & 1<x\le1.3\\
0.3 & 1.3<x\;.
\end{cases} 
\end{equation}
The large jump develops a shock at the bottom
and a rarefaction at the top, while the small jump develops a pure rarefaction.
Around $t=0.2$, the two similarity solutions interact, thereby lowering the separation
point between shock and rarefaction. The solution is shown in
Fig.~\ref{fig:results_inflection}.
The analysis presented in \cite{FarjounSeibold2009_2} shows that the convergence
obtained here is less than second order, due to the error on the intervals that abut the
inflection particle.

\begin{figure}
\centering
\begin{minipage}[t]{.32\textwidth}
\centering
\includegraphics[width=1.02\textwidth]{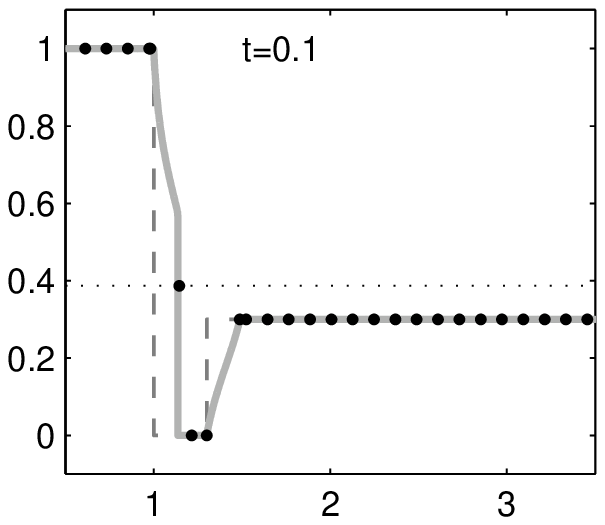}
\end{minipage}
\hfill
\begin{minipage}[t]{.32\textwidth}
\centering
\includegraphics[width=1.02\textwidth]{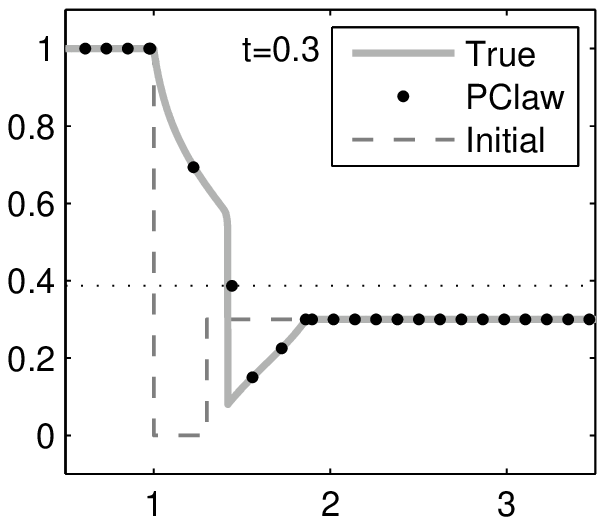}
\end{minipage}
\hfill
\begin{minipage}[t]{.32\textwidth}
\centering
\includegraphics[width=1.02\textwidth]{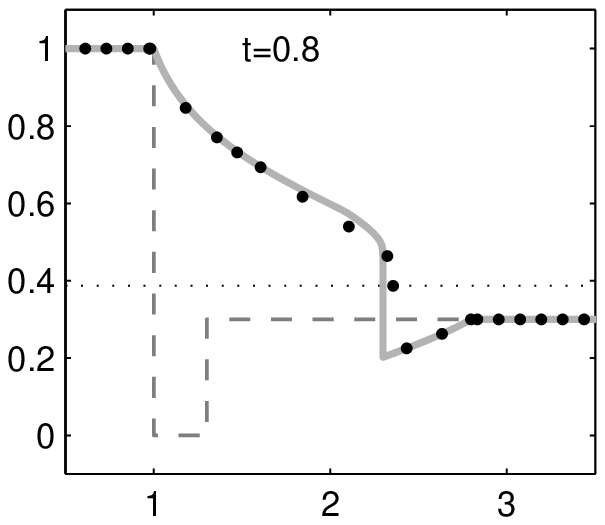}
\end{minipage}
\caption{The particle method solution for the Buckley-Leverett equation at three
different times. The exact solution is given in gray, and the dots show the numerical
approximation.}
\label{fig:results_inflection}
\end{figure}

\subsection{A Simple Source Term}
\label{subsec:numerics_source_terms}
We consider Burgers' equation with a source
\begin{equation*}
u_t+\prn{\tfrac{1}{2}u^2}_x = b'(x)u\;, \qquad b(x) =
\begin{cases}
\cos(\pi x) & x\in [4.5,5.5] \\
0 & \text{otherwise}\;.
\end{cases}
\end{equation*}
It is a simple model for shallow water flow over a bottom profile $b(x)$.
As in \cite{KarlsenMishraRisebro2008}, we consider the domain $x\in [0,10]$.
We include the source into the method of characteristics, as explained
in Sect.~\ref{subsec:source_terms}. Figure~\ref{fig:results_source} shows the numerical
solution for the initial condition 
\begin{equation}
u_0(x) = \begin{cases}
2 & x\le1\\ 
1 & x>1\;.
\end{cases} 
\end{equation}
The particle method (dots) approximates the exact solution (grey line).
A particular aspect in favor of the characteristic approach is the precise (up to the
precision of the Runge-Kutta scheme) recovery of the function values after the obstacle.

\begin{figure}
\centering
\begin{minipage}[t]{.32\textwidth}
\centering
\includegraphics[width=1.02\textwidth]{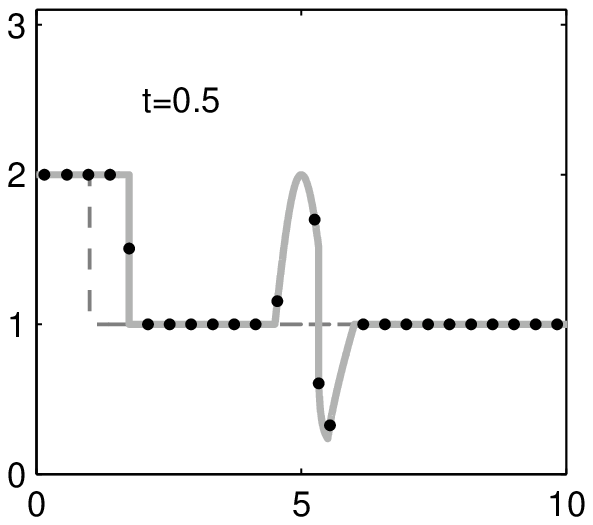}
\end{minipage}
\hfill
\begin{minipage}[t]{.32\textwidth}
\centering
\includegraphics[width=1.02\textwidth]{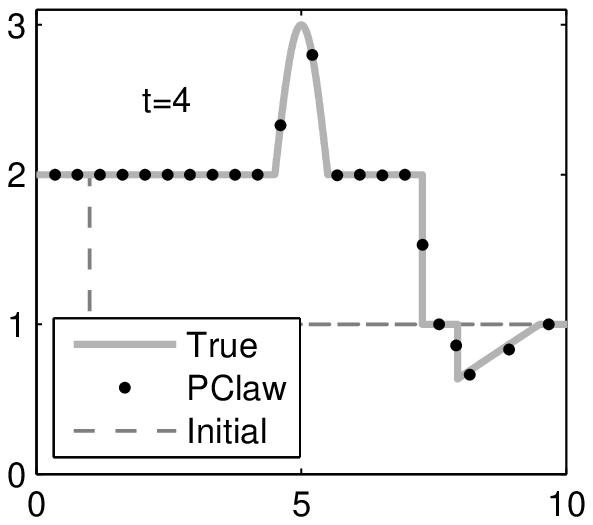}
\end{minipage}
\hfill
\begin{minipage}[t]{.32\textwidth}
\centering
\includegraphics[width=1.02\textwidth]{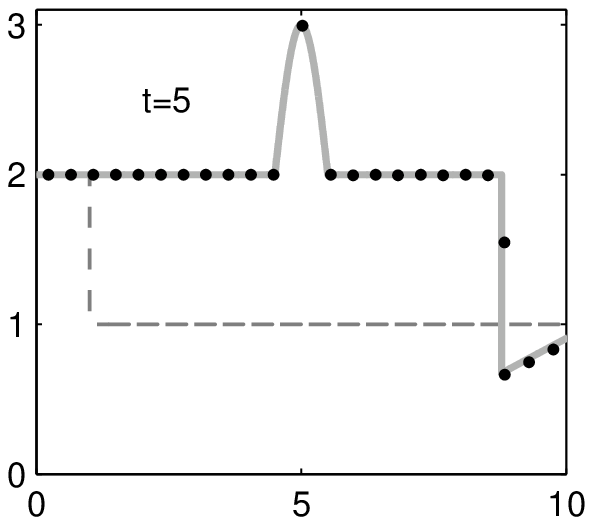}
\end{minipage}
\caption{Numerical results for Burgers' equation with a source. We can see that as time
progresses, a bump is formed, a shock/rarefaction structure emerges,  the wave passes
over the ``bump'', and the two shocks merge into one.}
\label{fig:results_source}
\end{figure}

\section{Further Applications and Extensions}
\label{sec:further_applications}
The presented numerical method employs the method of characteristics
and resolves the interaction of characteristic particles. 
This implies important advantages: the scheme is exact away
from shocks, the provided interpolation yields a certain level of ``subgrid''
resolution, shocks are located very accurately, and the method is TVD. 
A crucial drawback is that the basic method is limited
to scalar problems in one space dimension. 
However, this restriction is not as limiting as it may seem at first glance. 
Many complex problems that arise in applications consist of, or can be
reduced to, scalar one-dimensional sub-problems. 
As we motivate in the following, the presented particle method is a
good candidate for the numerical approximation of such sub-problems.

\subsection{Flow on Networks}
In many models of flows on networks, the evolution on each edge is governed by a scalar
one-dimensional conservation law, with appropriate coupling conditions on the network
nodes. An important example is the simulation of vehicular traffic on road networks. 
Optimization of traffic flow is a topic of current research \cite{HertyKlar2003}, and
frequently a key challenge in such simulations is simply the size of the network. 
For large networks, only a few number of unknowns can be attributed to
each individual road segment. 
Yet, exact conservation properties and accurate shock location are
required in order to track traffic jams. 
In addition, traffic densities must never become negative, hence TVD methods are desirable.
The presented particle method satisfies all those requirements: the method is TVD, the
solution is represented accurately with just a few particles, and shocks are located
accurately.

A simple traffic model that is commonly considered for network flows is
the Lighthill-Whitham model \cite{LighthillWhitham1955}. On each road segment, the
evolution of the vehicle density $\rho(x,t)$ is given by the scalar conservation law
\begin{equation}
\rho_t+\brk{\rho v(\rho)}_x =0\;.
\end{equation}
Here $v(\rho)$ is the speed at which vehicles at density $\rho$ move. It is a decaying
function. A popular choice is
$v(\rho) = v_\text{max}\brk{1-\frac\rho{\rho_\text{max}}}$, where
$v_\text{max}$ is the speed limit assumed on an empty road, and $\rho_\text{max}$
denotes the maximal density of vehicles on the road. This choice of $v(\rho)$ yields a
quadratic flux function, for which the presented particle method is particularly
efficient (see Rem.~\ref{rem:quadratic_flux}).
Another common choice is
$v(\rho) = v_\text{max}\exp\prn{-\frac{\rho}{\rho_0}}$, which results in a
flux function that has one inflection point $\rho^* = 2\rho_0$, and thus
can be treated as described in Sect.~\ref{subsec:inflection_points}.

For a road network, the road properties ($v_\text{max}$, $\rho_\text{max}$,
$\rho_0$) can differ from segment to segment. A proper definition of weak solutions
on such networks has been presented
in \cite{HoldenRisebro1995,CocliteGaravelloPiccoli2005}, by introducing
``demand'' and ``supply'' conditions that have to be satisfied at the nodes.
The translation of these conditions to the particle method is the subject of
current research.

\subsection{Stiff Source Terms}
In Sect.~\ref{subsec:source_terms}, we have considered the case of simple source terms
that are described by the method of characteristics, and that are dominated by the
advection, so that particle management is well described by the source-free similarity
interpolation. Here, we outline a case in which the interpolation between particles can
be used to deal with a dominant source term.

Many examples in chemical reaction kinetics can be described by convection-reaction
equations. Such problems are often times governed by a separation of time scales.
If the advection is scaled to happen on a time scale of $O(1)$, the reactions typically
happen on a much faster time scale $O(\tau)$, where $\tau\ll 1$.
A typical example is the balance law
\begin{equation}
u_t+\prn{f(u)}_x = \psi(u)\;,
\label{eq:convection_reaction}
\end{equation}
where $0\le u\le 1$ represents the density of some chemical quantity. The quantity is
evolved according to a nonlinear advection term, given by a strictly convex flux
function $f$, and a source term. Here we consider a bistable source
$\psi(u) = \tfrac{1}{\tau}u(u-1)(u-\beta)$, where $0<\beta<1$. This source term drives
the values of $u$ towards 0 if smaller than $\beta$, and towards 1 if larger than $\beta$.
This example is presented for instance in \cite{HelzelLevequeWarnecke2000}.
Equation \eqref{eq:convection_reaction} possesses shock solutions as the classical
Burgers' equation does. In addition, it has traveling wave solutions that connect
a left state $u_L \approx 0$ with a right state $u_R \approx 1$. The smooth function that
connects $u_L$ with $u_R$ is of width $\tau$, and it travels with velocity $f'(\beta)$,
as shown in \cite{FanJinTeng2000}.
The speed of propagation of such solutions is due to the interaction of the advection
and the reaction term, and many classical numerical approaches require the resolution
of the wave front in order to obtain the correct propagation speed. This can require an
unnecessarily fine local mesh resolution of $O(\tau)$.

In contrast, the presented particle method possesses key advantages that can be used
to yield correct propagation speeds and width of the front, without actually having to
resolve the wave structure. For the bistable reaction kinetics, we propose to place and
track particles wherever the interpolation crosses or touches the zeros of $\psi$
(here $0$, $\beta$, and $1$).
In a splitting approach, the particles are first moved according to the method of
characteristics \eqref{eq:method_of_characteristics_source}. Since the source is
dominant, this step neglects the evolution of the function between particles that
connect to the value $u=\beta$. Thus, in a correction step, a horizontal motion is added
to the particles which modifies the area between particles in agreement with the source
term. The change of area due to the source is calculated by using the interpolation
\eqref{eq:interpolation_function}, and change in area is found by \eqref{eq:area}. 
While this approach is an approximation (the interpolation
\eqref{eq:interpolation_function} is not the true solution between particles), it can be
shown that for a Riemann problem between $u_L = 0$ and $u_R = 1$, the particle solution
converges to a front that moves at speed $f'(\beta)$ and has a width $O(\tau)$.
The precise treatment of the transient phase when starting with general initial data
shall be addressed in future work.

This approach represents the thin reaction front by exactly three particles.
Other strategies to treat the stiff source term, which give more control over the number
of particles on the front, are subject to current research.

\subsection{Operator Source Terms}
\label{subsec:source_intergral}
Here we consider a conservation law with a source term \eqref{eq:conservation_source},
in which the source $\S u$ may involve derivatives or integrals of the solution.
In these cases, the method of characteristics does not apply. Instead, they can be
treated by fractional steps. In each time step, first the conservation law
$u_t+f(u)_x = 0$ is solved, using the basic particle method described before. Then, in
a second step, the equation $u_t = \S u$ is solved. In many cases, the ``subgrid''
resolution provided by the interpolation \eqref{eq:interpolation_function} between
particles can be employed in this source evolution step, either to improve accuracy
or to treat the evolution analytically.

As an example, consider Burgers' equation with a convolution source term
\begin{equation}
u_t + \prn{\tfrac{1}{2}u^2}_x = K\star u\;, \qquad\text{(with periodic boundary conditions)}
\label{eq:burgers_global_source}
\end{equation}
which can serve as a model for weakly non-linear waves in a closed, nonuniform,
slender box \cite{ShefterRosales1999}. The kernel $K$ embodies the non-uniformity in
the spatial domain.
For a specific family of kernels $K$, the mass ($\int u \ud x$) is always conserved,
and in addition, the total entropy (here $\int u^2 \ud x$) is conserved as well, if the
solution is continuous. The entropy is only diminished at shocks. These properties
are exactly reproduced by our particle method. In contrast, many other numerical
methods can diminish the entropy even in the absence of shocks.

The set of long-term solutions of \eqref{eq:burgers_global_source} has been conjectured
to possess an interesting structure. Using our numerical method we are able to examine
the solution space for evidence of this structure.

\subsection{Higher Space Dimensions}
\label{subsec:higher_dimensions}
A scalar conservation law in higher space dimensions
\begin{equation*}
u_t+\nabla\cdot\vec{f}(u) = 0\;,
\end{equation*}
with a vector-valued flux function $\vec{f}(u) = \prn{f_1(u),\dots,f_d(u)}$,
can be approximated using dimensional splitting. Each time step consists of the successive
solution of one-dimensional conservation laws
\begin{equation}
u_t+\prn{f_i(u)}_{x_i} = 0\;,
\label{eq:conservation_law_di}
\end{equation}
for $i=1,\dots,d$.
While this approach is straightforward for Eulerian methods on regular grids,
Lagrangian approaches have to remesh the solution obtained by
\eqref{eq:conservation_law_di}, before solving in the direction of the next unknown.
In \cite{HoldenRisebro2002}, this approach is outlined for front tracking.
We propose to use an analogous approach for the presented particle method, thus
replacing the tracking of shocks by the tracking of similarity waves.

While higher space dimensions can be resolved by dimensional splitting, this approach
is not fully satisfying, since the required remeshing step spoils one of the main
benefits of the rarefaction tracking approach: the evolution of an exact solution
away from shocks. Similarly as for source terms, it may be more desirable to use the
method of characteristics directly. The characteristic system
\begin{equation*}
\begin{cases}
\dot{\vec{x}} = \nabla \vec f(u) \\
\dot u = 0
\end{cases}
\end{equation*}
yields, as in the one dimensional case, the exact solution on the moving particles.
However, two fundamental complications arise. 
First, at a shock, particles may not collide, but instead move ``around'' each other. 
Second, the definition of an interpolation between particles is more difficult. 
Both aspects are subject to current research.

\section{Conclusions and Outlook}
We have presented a numerical method for one-dimensional scalar conservation laws that
combines the method of characteristics, local similarity solutions, and particle
management. At any time, the solution is approximated by rarefaction and compression
waves, which are evolved exactly. Breaking waves, and thus shocks, are resolved by
local merging of particles. The method is designed to conserve area exactly, not increase
the total variation of the solution, and decrease its entropy. Hence, it is exactly
conservative, TVD, and possesses no numerical dissipation away from shocks. Shocks can
be located accurately by a simple postprocessing step. The method is extended to
non-convex flux functions and to simple balance laws. Numerical solutions for convex
flux functions are second order accurate.

The approach performs well in various examples. Just a few particles yield good
approximations with sharp and well-located shocks. This makes the method attractive
whenever conservation or balance laws arise as subproblems in a large computation, and
only a few degrees of freedom can be devoted to the numerical solution of a single
sub-problem.
The exact conservation of area, and the exact conservation of entropy in
the absence of shocks, make the method a natural candidate for applications
in which conservation of these quantities is crucial.
We have outlined applications in which these features can make the presented particle
method advantageous over classical fixed grid approaches.

Similarly to classical Eulerian and Lagrangian approaches, the presented method can
be extended to more general equations and higher space dimensions using fractional
steps. The idea is to treat advection by particles, and approximate the solution by
similarity waves. Thus, classical 1D Riemann solvers can be replaced by
1D \emph{wave solvers}. However, fractional step methods incur various disadvantages.
A fundamental question is how more general equations and higher space dimensions
can be treated directly.

\section*{Acknowledgments}
The authors would like to thank R.~LeVeque for helpful comments and suggestions.
The support by the National Science Foundation is acknowledged.
Y.~Farjoun was partially supported by grant DMS--0703937.
B.~Seibold was partially supported by grant DMS--0813648.

\bibliographystyle{amsplain}
\bibliography{references_complete}

\end{document}